\DeclareMathOperator {\Ker}{ker}
\DeclareMathOperator {\spann}{span}
\DeclareMathOperator {\Cl}{Cl}
\DeclareMathOperator {\divisor}{div}
\DeclareMathOperator {\WDiv}{WDiv}
\DeclareMathOperator {\Spec}{Spec}
\DeclareMathOperator {\Aut}{Aut}
\DeclareMathOperator {\Reg}{Reg}
\DeclareMathOperator {\Ad}{Ad}
\DeclareMathOperator {\Ann}{Ann}
\DeclareMathOperator {\ad}{ad}
\def\A  {\mathbb A}
\def\N  {\mathbb N}
\def\Z  {\mathbb Z}
\def\C  {\mathbb C}
\def\Ga  {\mathbb G_a}
\def\Q  {\mathbb Q}
\def\K  {\mathbb K}
\def\P  {\mathbb P}
\def\epsilon{\varepsilon}
\let\emptyset\varnothing
\newtheorem{lmm}{Lemma}
\newtheorem{exmp}{Example}
\newtheorem{corollary}{Corollary}
\newtheorem{theorem}{Theorem}
\let\oldexmp\exmp
\renewcommand{\exmp}{\oldexmp\normalfont}
\theoremstyle{definition}
\newtheorem{defn}{Definition}
\newtheorem{prob}{Problem}
\newtheorem{remark}{Remark}
\begin{document}\sloppy

\title{Additive actions on complete toric surfaces}
\author{Sergey Dzhunusov}
\address{Lomonosov Moscow State University, Faculty of Mechanics and Mathematics, Department
of Higher Algebra, Leninskie Gory 1, Moscow, 119991 Russia}
\email{dzhunusov398@gmail.com}
\renewcommand{\address}{{
    \footnotesize
    \textsc{Moscow}\par\nopagebreak
    \textit{E-mail} : \texttt{dzhunusov398@gmail.com}
  }}

\date{}
\thanks{The author was supported by RSF grant 19-11-00172.}
\begin{abstract}
  By an additive action on an algebraic variety $X$ we mean a regular effective action $\mathbb{G}_a^n\times X\to X$ with an open orbit of the commutative unipotent group~$\mathbb{G}_a^n$.
  In this paper, we give a classification of additive actions on complete toric surfaces. 

\end{abstract}

\subjclass[2010]{Primary 14L30, 14M25; \ Secondary 13N15, 14J50, 14M17}
\keywords{Toric variety, complete surface, automorphism, unipotent group, locally nilpotent derivation, Cox ring, Demazure root}

\maketitle
\section{Introduction}
Let $X$ be an irreducible algebraic variety of dimension $n$ over an algebraically closed field $\K$ of characteristic zero and $\Ga=(\K,+)$ be the additive group of the ground field.
Consider the commutative unipotent group $\Ga^n=\Ga\times\ldots\times\Ga$ ($n$~times).
By an additive action on $X$ we mean an effective regular action $\Ga^n\times X\to X$ with an open orbit.
In other words, an additive action on a complete variety $X$ allows to consider $X$ as a completion of the affine space $\A^n$ that is equivariant with respect to the group of parallel translations on $\A^n$.

The study of additive actions began with the work of Hassett and Tschinkel~\cite{HT}.
They introduced a correspondence between additive actions on the projective space $\P^n$  and local $(n+1)$-dimensional commutative associative algebras with unit; see also~\cite[Proposition~5.1]{KL} for a more general correspondence.
Hassett-Tshinkel's correspondence makes it possible to classify additive actions on $\P^n$ for $n \leq 5$; these are precisely the cases when the number of additive actions is finite.

The study of additive actions was originally motivated by problems of arithmetic geometry. Chambert-Loir and Tschinkel~\cite{CLT1, CLT2} gave asymptotic formulas for the number of rational points of bounded height on smooth (partial) equivariant compactifications of the vector group.

There is a number of results on additive actions
on flag varieties \cite{A1, Fe, FH, Dev},
singular del Pezzo surfaces \cite{DL},
Hirzebruch surfaces \cite{HT} and weighted projective planes \cite{ABZ}.

This work concerns the case of toric varieties.
The problem of classification of additive actions on toric varieties is raised in~\cite[Section~6]{AS}.

It is proved in~\cite{De} that $\Ga$-actions on a toric variety $X_{\Sigma}$ normalized by the acting torus~$T$ are in bijection with some elements $e \in M$, where $M$ is the character lattice of torus~$T$.
These vectors are called Demazure roots of the corresponding fan $\Sigma$.
Cox~\cite{Cox} observed that normalized $\Ga$-actions on a toric variety can be interpreted as certain $\Ga$-subgroups of automorphisms of the Cox ring $R(X)$ of the variety $X$.
In turn, such subgroups correspond to homogeneous locally nilpotent derivations of the Cox ring.

In~\cite{AR} all toric varieties admitting an additive action are described.
It turns out that if a complete toric variety $X$ admits an additive action, then it admits an additive action normalized by the acting torus.
Moreover, any two normalized additive actions on $X$ are isomorphic.

This work solves the problem of classification of additive actions for a complete toric surface.
It was known in particular cases of the projective plane~\cite[Proposition~3.2]{HT}, Hirzebruch surfaces~\cite[Proposition~5.5]{HT} and some weighted projective planes~\cite[Proposition~7]{ABZ} that these surfaces admit exactly two non-isomorphic additive actions, the normalized and the non-normalized ones.
In Theorem~\ref{main}, we prove that any complete toric surface admits at most two non-isomorphic additive actions and characterize the fans of surfaces that admit precisely two additive actions.

After presenting some preliminaries on toric varieties and Cox ring (Section~\ref{coxring}) and $\Ga$-actions and Demazure roots (Section~\ref{intrga}), we describe the results of~\cite{AR} (Section~\ref{intrbas}).
In Section~\ref{draa}, we prove some facts about Demazure roots of a toric variety admitting an additive action.
In Section~\ref{mainsection}, we formulate and prove the main theorem of this work.
In Section~\ref{example}, we give several explicit examples of additive actions on toric surfaces in Cox ring coordinates and discuss the further research.

The author is grateful to his supervisor Ivan Arzhantsev for posing the problem and permanent support and to Yulia Zaitseva for useful discussions and comments.

\section{Toric varieties and Cox rings}\label{coxring}
In this section we introduce basic notation of toric geometry,
see \cite{Fu, CLS} for details.

\begin{defn}
  A \emph{toric variety} is a normal variety~$X$ containing a torus~$T\simeq (\K^*)^n$ as a Zariski open subset such that the action of~$T$ on itself extends to an action of~$T$ on~$X$.
\end{defn}
Let~$M$ be the character lattice of~$T$ and~$N$ be the lattice of one-parameter subgroups of~$T$. Let~$\langle \cdot\,, \cdot\rangle: N \times M \to \Z$ be the natural pairing between the lattice N and the lattice M.
It extends to the pairing $\langle \cdot\,, \cdot\rangle_{\Q}: N_{\Q} \times M_{\Q} \to \Q$  between the vector spaces $N_{\Q}=N\otimes \Q$ and $M_{\Q}=M\otimes \Q$.
\begin{defn}
  A \emph{fan} $\Sigma$ in the vector space $N_{\Q}$ is a finite collection of strongly convex polyhedral cones $\sigma$ such that:
  \begin{enumerate}
  \item For all cones $\sigma \in \Sigma$, each face of $\sigma$ is also in $\Sigma$.
  \item For all cones $\sigma_1, \sigma_2 \in \Sigma$, the intersection $\sigma_1 \cap \sigma_2$ is a face of the cones $\sigma_1$ and $\sigma_2$.
  \end{enumerate}
\end{defn}

There is one-to-one correspondence between normal toric varieties $X$ and fans $\Sigma$ in the vector space $N_{\Q}$,
see \cite[Section~3.1]{CLS} for details.

\smallskip

Here we recall basic notions of the Cox construction, see~\cite[Chapter~1]{ADHL} for more details.
Let $X$ be a normal variety.
Suppose that the variety $X$ has free finitely generated divisor class group~$\Cl(X)$ and there are only constant invertible regular functions on~$X$.
Denote the group of Weil divisors on $X$ by $\WDiv(X)$ and consider a subgroup~$K \subseteq \WDiv(X)$ which maps onto $\Cl(X)$ isomorphically. The~ \emph{Cox~ring} of the variety $X$ is defined as
$$
R(X)=\bigoplus_{D\in K} H^0(X,D) = \bigoplus_{D\in K}\{f\in\K(X)^{\times}\mid \divisor(f)+D\geqslant0\}\cup\{0\}$$
and multiplication on homogeneous components coincides with multiplication in the field of rational functions $\K(X)$ and extends to the Cox ring $R(X)$ by linearity. It is easy to see that up to isomorphism the graded ring
$R(X)$ does not depend on the choice of the subgroup~$K$.

Suppose that the Cox ring $R(X)$ is finitely generated. Then ${\overline{X}:=\Spec R(X)}$ is a normal affine variety with an action of the torus $H_X := \Spec\K[\Cl(X)]$. There is an open $H_X$-invariant subset $\widehat{X}\subseteq \overline{X}$ such that the complement $\overline{X}\backslash\widehat{X}$ is of codimension at least two in $\overline{X}$,
there exists a good quotient $p_X\colon\widehat{X}\rightarrow\widehat{X}/\!/H_{X}$, and the quotient space $\widehat{X}/\!/H_{X}$ is isomorphic to $X$, see \cite[Construction~1.6.3.1]{ADHL}. Thus, we have the following diagram
$$
\begin{CD}
\widehat{X} @>{i}>> \overline{X}=\Spec R(X)\\
@VV{/\!/H_{X}}V  \\
X
\end{CD}
$$

It is proved in \cite{Cox} that if $X$ is toric, then $R(X)$ is a polynomial algebra
$\K[x_1,\ldots,x_m]$, where the variables $x_i$ correspond to $T$-invariant prime divisors $D_i$ on $X$ or, equivalently, to the rays $\rho_i$ of the corresponding fan $\Sigma$.
The $\Cl(X)$-grading on $R(X)$ is given by $\deg(x_i)=[D_i]$.
In this case, $\overline{X}$ is isomorphic to $\K^m$, and $\overline{X}\setminus\widehat{X}$ is a union of some coordinate subspaces in $\K^m$ of codimension at least two.
Denote by $\mathbb T$ the torus $(\K^{*})^m$ acting on the variety $\overline{X}$.
Each $w \in M$ gives a character $\chi^w : T \to \K^{*}$, and  hence $\chi^w$ is a rational function on $X$.
By \cite[Theorem 4.1.3]{CLS}, the function $\chi^w$ defines the principal divisor ${\divisor}(\chi^w) = - \sum_{\rho} \langle w,u_\rho\rangle D_\rho$.
Let us consider the map $M \longrightarrow \Z^{m} $ defined by~$w \mapsto  (\langle w,u_{\rho_1}\rangle, \ldots, \langle w,u_{\rho_m}\rangle)$, where $\rho_1, \ldots, \rho_m$ are one-dimensional cones of $\Sigma$.
We identify the group $\Z^m$ with the character lattice of the torus $(\K^*)^m$.
Thus, every element $w \in M$ corresponds to the character $\overline{\chi}^w$ of the torus $\mathbb T$.
Moreover, for any $w, w' \in M$  the equality $w = w'$ holds if and only if $\overline{\chi}^w =\overline{\chi}^{w'}$.

\section{Demazure roots and locally nilpotent derivations}\label{intrga}
Let $X_{\Sigma}$ be a toric variety of dimension $n$ and $\Sigma$ be the fan of the variety $X_{\Sigma}$.
Let $\Sigma(1)=\{\rho_1, \ldots, \rho_m\}$ in $N$ be the set of rays of the fan $\Sigma$ and $p_i$ be the primitive lattice vector on the ray $\rho_i$.

For any ray $\rho_i\in \Sigma(1)$ we consider the set $\mathfrak R_i$ of all vectors $e \in M$ such that
\begin{enumerate}
\item $\langle p_i, e\rangle=-1$ and $\langle p_j, e \rangle \geq 0$ for $j \neq i$, $1 \leq j \leq n$;
\item if  $\sigma$ is a cone of $\Sigma$ and $\langle v, e \rangle=0$ for all $v \in \sigma$, then the cone generated by $\sigma$ and $\rho_i$ is in $\Sigma$ as well.
\end{enumerate}

Elements of the set $\mathfrak R = \bigcup\limits_{i=1}^m \mathfrak R_i$ are called \emph{Demazure roots} of the fan $\Sigma$ (see \cite[Section~3.1]{De} or \cite[Section~3.4]{Oda}).
Let us divide the roots $\mathfrak R$ into two classes:
\begin{equation*}
\begin{array}{ccc}
\mathfrak S = \mathfrak R \cap -\mathfrak R &,\quad& \mathfrak U = \mathfrak R \setminus \mathfrak S.
\end{array}
\end{equation*}
Roots in $\mathfrak{S}$ and $\mathfrak{U}$ are called \emph{semisimple} and \emph{unipotent} respectively.

A derivation $\partial$ of an algebra $A$ is said to be \emph{locally nilpotent} (LND) if for every $f\in A$ there exists $k\in\N$ such that $\partial^k(f)=0$. For any LND $\partial$ on $A$ the map ${\varphi_{\partial}:\Ga\times A\rightarrow A}$, ${\varphi_{\partial}(s,f)=\exp(s\partial)(f)}$, defines a
structure of a rational $\Ga$-algebra on $A$. A derivation $\partial$ on a graded ring $A = \bigoplus\limits_{\omega \in K} A_{\omega}$ is said to be \emph{homogeneous} if it respects the~$K$-grading. If ${f,h\in A\backslash \Ker\partial}$ are homogeneous, then ${\partial(fh)=f\partial(h)+\partial(f)h}$ is homogeneous too and ${\deg\partial(f)-\deg f=\deg\partial(h)-\deg h}$. So any homogeneous derivation $\partial$ has a well-defined \emph{degree} given
as $\deg\partial=\deg\partial(f)-\deg f$ for any homogeneous $f\in A\backslash \Ker\partial$.

Every locally nilpotent derivation of degree zero on the Cox ring $R(X_{\Sigma})$ induces a regular action~$\Ga\times X_{\Sigma}\to X_{\Sigma}$. In fact, any regular $\Ga$-action on $X_{\Sigma}$ arises this way, see \cite[Section~4]{Cox} and \cite[Theorem~4.2.3.2]{ADHL}.
If a $\Ga$-action on a variety $X_{\Sigma}$ is normalized by the acting torus $T$, then the lifted
$\Ga$-action on $\overline{X}_{\Sigma}=\K^m$ is normalized by the diagonal torus $\overline{T} = (\K^{*})^m$. Conversely,
any $\Ga$-action on $\K^m$ normalized by the torus $(\K^{\times})^m$ and commuting
with the subtorus $H_{X_{\Sigma}}$ induces a $\Ga$-action on $X_{\Sigma}$. This shows that $\Ga$-actions on $X_{\Sigma}$
normalized by $T$ are in bijection with locally nilpotent derivations of the Cox ring
$\K[x_1,\ldots,x_m]$ that are homogeneous with respect to the standard grading by the lattice
$\Z^m$ and have degree zero with respect to the $\Cl(X_{\Sigma})$-grading.

For any element $e\in \mathfrak R_i$ we consider a locally nilpotent derivation $\prod\limits_{j \neq i}x_{j}^{\langle p_j, e \rangle}\frac{\partial}{\partial x_i}$ on the algebra $R(X_{\Sigma})$.
This derivation has degree zero with respect to the grading by the group~$\Cl(X_{\Sigma})$.
This way one obtains a bijection between the Demazure roots in $\mathfrak R$ and locally nilpotent derivations of degree zero on the ring $R(X_{\Sigma})$, which in turn are in bijection with  $\Ga$-actions on $X_{\Sigma}$ normalized by the acting torus.

\begin{lmm}\label{lndconj}
  Let $D_e$ be a homogeneous LND that corresponds to the Demazure root $e\in M$ and $t$ be an element of maximal torus $\mathbb{T}$.
  Then,
  $$tD_et^{-1} = \overline{\chi}^e(t)D_e.$$
\end{lmm}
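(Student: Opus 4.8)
The plan is to verify the identity by evaluating both sides on the polynomial generators $x_1,\dots,x_m$ of the Cox ring $R(X_\Sigma)=\K[x_1,\dots,x_m]$, since a derivation of a polynomial algebra is completely determined by its values on the variables, and since both $tD_et^{-1}$ (a conjugate of a derivation by an algebra automorphism) and $\overline{\chi}^e(t)D_e$ (a scalar multiple of a derivation) are again derivations. Write $t=(t_1,\dots,t_m)\in\mathbb T$; the torus acts as the algebra automorphism sending $x_j\mapsto t_jx_j$ (the action induced by the standard grading of $R(X_\Sigma)$ by the character lattice $\Z^m$ of $\mathbb T$), so that $t^{-1}$ acts by $x_j\mapsto t_j^{-1}x_j$. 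The conjugate $tD_et^{-1}$ is then the derivation $f\mapsto t\cdot D_e\bigl(t^{-1}\cdot f\bigr)$.

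First I would compute $D_e(x_k)$ directly from $D_e=\prod_{j\neq i}x_j^{\langle p_j,e\rangle}\tfrac{\partial}{\partial x_i}$ for $e\in\mathfrak R_i$: it vanishes for $k\neq i$ and equals $\prod_{j\neq i}x_j^{\langle p_j,e\rangle}$ for $k=i$. Hence it suffices to check the claimed equality on $x_i$. Applying $t^{-1}$ first multiplies $x_i$ by $t_i^{-1}$; applying $D_e$ then yields $t_i^{-1}\prod_{j\neq i}x_j^{\langle p_j,e\rangle}$; finally applying $t$ replaces each surviving $x_j$ by $t_jx_j$, contributing a scalar $\prod_{j\neq i}t_j^{\langle p_j,e\rangle}$. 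Collecting the scalars gives
\begin{equation*}
(tD_et^{-1})(x_i)=t_i^{-1}\Bigl(\prod_{j\neq i}t_j^{\langle p_j,e\rangle}\Bigr)\,D_e(x_i).
\end{equation*}

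The key step, and really the only substantive point, is to recognize that this scalar collapses to $\overline{\chi}^e(t)$. Since $e\in\mathfrak R_i$ satisfies the defining condition $\langle p_i,e\rangle=-1$, we have $t_i^{-1}=t_i^{\langle p_i,e\rangle}$, and the scalar becomes $\prod_{j=1}^m t_j^{\langle p_j,e\rangle}$, which is precisely the value $\overline{\chi}^e(t)$ of the character associated to $e$ under the correspondence $w\mapsto(\langle p_1,w\rangle,\dots,\langle p_m,w\rangle)$ recalled in Section~\ref{coxring}. Thus $(tD_et^{-1})(x_i)=\overline{\chi}^e(t)\,D_e(x_i)$, while for $k\neq i$ both sides vanish; as the two derivations agree on all generators, they coincide. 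I do not anticipate a genuine obstacle here: the content is entirely the bookkeeping of the torus weights, and the normalization $\langle p_i,e\rangle=-1$ built into the definition of a Demazure root is exactly what makes the exponent on $t_i$ combine with the remaining factors into the single product defining $\overline{\chi}^e$. (Equivalently, one may observe that $\deg D_e=\sum_{j=1}^m\langle p_j,e\rangle\,\deg x_j$ equals the weight $\overline{\chi}^e$ and invoke the general fact that conjugating a homogeneous derivation of degree $d$ by $t$ multiplies it by the character of $d$.)
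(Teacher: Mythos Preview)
Your proof is correct and follows essentially the same approach as the paper: both evaluate $tD_et^{-1}$ on the distinguished variable $x_i$, obtain the scalar $t_i^{-1}\prod_{j\neq i}t_j^{\langle p_j,e\rangle}$, and then use $\langle p_i,e\rangle=-1$ to rewrite it as $\prod_{j=1}^m t_j^{\langle p_j,e\rangle}=\overline{\chi}^e(t)$. Your write-up is somewhat more explicit about why it suffices to check on generators and why both sides vanish on $x_k$ for $k\neq i$, but the argument is the same.
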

\begin{proof}
  By definition, the derivation $D_e$ is equal to $\prod\limits_{j\neq i} x_j^{\langle p_j, e \rangle}\frac{\partial}{\partial x_i}$.
  Let us consider the image $tD_et^{-1}(x_i)$ of an element $x_i$. It is equal to $t_i^{-1}\prod\limits_{j\neq i} t_j^{\langle p_j, e \rangle}\prod\limits_{j\neq i} x_j^{\langle p_j, e \rangle}$.
  Thus, we get
  $$tD_et^{-1}=t_i^{-1}\prod\limits_{j\neq i} t_j^{\langle p_j, e \rangle} D_e = \prod\limits_{j=1}^m t_j^{\langle p_j, e \rangle}D_e = \overline{\chi}^e(t)D_e.$$
\end{proof}

\section{Complete toric varieties admitting an additive action}\label{intrbas}
In this section, we shortly present the results of \cite{AR}.
Let $X_{\Sigma}$ be a toric variety of dimension $n$ admitting an additive action and $\Sigma$ be the fan of the variety $X_{\Sigma}$.
Denote primitive vectors on the rays of the fan $\Sigma$ by $p_i$, where $1 \leq i \leq m$.

\begin{defn} \label{deff}
A set $e_1,\ldots,e_n$ of Demazure roots of a fan $\Sigma$ of dimension $n$ is called a {\it complete collection} if $\langle p_i,e_j\rangle=-\delta_{ij}$ for all $1\le i,j\le n$ for some ordering of $p_1, \ldots, p_m$.
\end{defn}
An additive action on a toric variety $X_{\Sigma}$ is said to be \emph{normalized} if the image of the group $\Ga^n$ in $\Aut(X_{\Sigma})$ is normalized by the acting torus.
\begin{theorem}{\cite[Theorem 1]{AR}} \label{cc}
  Let $X_{\Sigma}$ be a toric variety. Then normalized additive actions on $X_{\Sigma}$ normalized are in bijection with complete collections of Demazure roots of the fan $\Sigma$.
\end{theorem}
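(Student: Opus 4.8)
The plan is to run everything through the Cox-ring / LND dictionary of Section~\ref{intrga}: I will match a normalized additive action with an $n$-dimensional commutative Lie algebra of homogeneous degree-zero LNDs on $R(X_\Sigma)=\K[x_1,\dots,x_m]$, and then read off the Demazure roots. The two directions are proved separately and checked to be mutually inverse. For the forward direction, start from a complete collection $e_1,\dots,e_n$ ordered so that $\langle p_i,e_j\rangle=-\delta_{ij}$, and form the LNDs $D_{e_j}=\prod_{l\ne j}x_l^{\langle p_l,e_j\rangle}\frac{\partial}{\partial x_j}$. Since $\langle p_i,e_j\rangle=0$ whenever $i\ne j$ with $i,j\le n$, the monomial coefficient of $D_{e_j}$ does not involve $x_i$ for $i\le n,\ i\ne j$; a one-line commutator computation then gives $[D_{e_i},D_{e_j}]=0$. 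Hence the $D_{e_j}$ integrate to a $\Ga^n$-action on $\overline X=\K^m$, and since each is of degree zero (so commutes with $H_X$) and homogeneous with respect to $\overline T$ by Lemma~\ref{lndconj}, this action descends to a $\Ga^n$-action on $X_\Sigma$ normalized by the torus.

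To see that this action is an additive action, fix a point $\bar x$ in the open torus $\overline T\subseteq\widehat X$ (all coordinates nonzero), which is legitimate because $\overline X\setminus\widehat X$ is a union of coordinate subspaces. At $\bar x$ the orbit vectors $D_{e_j}(\bar x)$ are nonzero multiples of the coordinate directions $\frac{\partial}{\partial x_1},\dots,\frac{\partial}{\partial x_n}$, while the tangent space to the $H_X$-orbit is the $(m-n)$-dimensional space $\{(a_l\bar x_l)_l:\sum_l a_lp_l=0\}$. Because the matrix $\langle p_i,e_j\rangle=-\delta_{ij}$ is invertible, $p_1,\dots,p_n$ are linearly independent and hence a basis of $N_\Q$; projecting away the first $n$ coordinates and solving $\sum_{l\le n}a_lp_l=-\sum_{l>n}a_lp_l$ shows these two subspaces together span $\K^m$. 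Thus the $\Ga^n\times H_X$-orbit is dense in $\overline X$, so the $\Ga^n$-orbit is open in $X_\Sigma$. Effectiveness is then automatic: the open orbit forces the generic stabilizer in $\Ga^n$ to be trivial, and since $\Ga^n$ is connected and torsion-free the kernel of $\Ga^n\to\Aut(X_\Sigma)$ is trivial.

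For the converse, a normalized additive action lifts to an $n$-dimensional commutative Lie algebra $\mathfrak g$ of degree-zero LNDs on $R(X_\Sigma)$ on which $\overline T$ acts by conjugation, and I decompose $\mathfrak g=\bigoplus_\chi\mathfrak g_\chi$ into $\overline T$-weight spaces. By Lemma~\ref{lndconj} every nonzero homogeneous degree-zero LND is proportional to some $D_e$ for a Demazure root $e$, and distinct roots yield distinct weights $\overline\chi^e$; hence each $\mathfrak g_\chi$ is at most one-dimensional and $\mathfrak g$ acquires a basis $D_{e_1},\dots,D_{e_n}$ for distinct roots $e_j\in\mathfrak R_{i_j}$. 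Repeating the orbit-dimension count of the previous paragraph, openness forces the indices $i_1,\dots,i_n$ to be pairwise distinct, so after reindexing the rays we have $i_j=j$ and $\langle p_j,e_j\rangle=-1$. Finally, commutativity $[D_{e_j},D_{e_k}]=0$ for $j\ne k$ splits into two terms lying in the distinct directions $\frac{\partial}{\partial x_k}$ and $\frac{\partial}{\partial x_j}$, which must vanish separately; this forces $\frac{\partial}{\partial x_j}\big(\prod_l x_l^{\langle p_l,e_k\rangle}\big)=0$, i.e. $\langle p_j,e_k\rangle=0$. Therefore $\langle p_i,e_j\rangle=-\delta_{ij}$ for all $i,j\le n$, so $\{e_1,\dots,e_n\}$ is a complete collection, and the two constructions are inverse to each other through the root--LND correspondence.

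The main obstacle is the faithful translation of the \emph{open-orbit} condition on $X_\Sigma$ into linear data on $\overline X$: one must combine the $\Ga^n$-orbit directions with the $(m-n)$-dimensional $H_X$-orbit and reduce density to the linear independence of $p_{i_1},\dots,p_{i_n}$, and in the converse direction deduce the distinctness of the indices $i_j$ from openness. This step requires correctly using that $\overline X\setminus\widehat X$ has codimension at least two (so a general point of the big torus is admissible) and the identity $\dim H_X=m-n$. By comparison, the commutativity step that produces the off-diagonal vanishing $\langle p_j,e_k\rangle=0$ is a short and robust monomial computation.
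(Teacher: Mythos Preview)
The paper does not supply its own proof of this theorem: it is quoted verbatim from \cite[Theorem~1]{AR} and used as a black box, so there is no in-paper argument to compare against. Your proposal is therefore not duplicating or diverging from anything in the present text; it is filling in what the paper outsources to \cite{AR}.

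As a proof, your argument is sound and follows exactly the Cox--ring/LND route that Section~\ref{intrga} sets up (and that \cite{AR} uses). The forward direction is clean: the vanishing $\langle p_i,e_j\rangle=0$ for $i\ne j\le n$ indeed kills both terms of $[D_{e_i},D_{e_j}]$, and your tangent-space count at a point of the big torus, reducing openness to the fact that $p_1,\dots,p_n$ span $N_\Q$, is the right mechanism. In the converse, decomposing the lifted abelian Lie algebra $\mathfrak g$ into $\overline T$--weight spaces and identifying each weight vector with some $D_e$ is exactly what one should do; commutativity then forces the off--diagonal pairings to vanish.

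Two small points of polish, neither a genuine gap. First, the sentence ``By Lemma~\ref{lndconj} every nonzero homogeneous degree-zero LND is proportional to some $D_e$'' mis-cites: Lemma~\ref{lndconj} only computes the $\overline T$--weight of $D_e$. The statement you actually need is the bijection recorded at the end of Section~\ref{intrga} (together with the injectivity $\overline\chi^e=\overline\chi^{e'}\Rightarrow e=e'$ from Section~\ref{coxring}), so cite that instead. Second, in the forward direction you should remark that the $\Ga^n$--action on $\overline X$ preserves $\widehat X$ and hence descends; this is exactly what condition~(2) in the definition of a Demazure root guarantees, and it is implicit in the bijection of Section~\ref{intrga}, but it deserves one sentence since without it ``degree zero'' alone does not suffice for descent on an arbitrary toric variety.
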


\begin{corollary}
A toric variety $X_{\Sigma}$ admits a normalized additive action if and only if there is a complete collection of Demazure roots of the fan $\Sigma$.
\end{corollary}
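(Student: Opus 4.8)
The plan is to read the statement off directly from the bijection established in Theorem~\ref{cc}. That theorem provides a bijection between the set of normalized additive actions on $X_{\Sigma}$ and the set of complete collections of Demazure roots of the fan $\Sigma$. A bijection between two sets restricts to a correspondence of their (non)emptiness, so these two sets are simultaneously empty or nonempty, and this is precisely the asserted equivalence.

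Spelling out the two directions: for the forward implication, if $X_{\Sigma}$ admits a normalized additive action, then this action lies in the domain of the bijection of Theorem~\ref{cc}, and its image is a complete collection of Demazure roots of $\Sigma$; in particular, such a collection exists. For the reverse implication, given a complete collection of Demazure roots of $\Sigma$, its preimage under the same bijection is a normalized additive action on $X_{\Sigma}$, so $X_{\Sigma}$ admits one. No further construction is required, since Theorem~\ref{cc} already supplies the correspondence in both directions.

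There is no genuine obstacle here: the entire content of the corollary is contained in Theorem~\ref{cc}, and the only additional fact used is the elementary observation that a bijection pairs nonempty domains with nonempty codomains. I would record the statement in this form mainly for convenience of later reference, since it is the existence criterion—rather than the finer counting of actions—that is invoked when analyzing fans of complete toric surfaces in the subsequent sections.
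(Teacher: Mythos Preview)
Your proposal is correct and matches the paper's treatment: the paper states the corollary immediately after Theorem~\ref{cc} with no separate proof, treating it as an obvious consequence of the bijection, which is exactly the argument you give.
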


\begin{theorem}{\cite[Theorem 3]{AR}}\label{3con}
Let $X_{\Sigma}$ be a complete toric variety with an acting torus~$T$. The following conditions are equivalent.
\begin{itemize}
\item[(1)]
There exists an additive action on $X_{\Sigma}$.
\item[(2)]
  There exists a normalized additive action on $X_{\Sigma}$.
\end{itemize}
\end{theorem}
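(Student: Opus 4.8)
The implication $(2)\Rightarrow(1)$ is immediate, since a normalized additive action is in particular an additive action; the whole content lies in $(1)\Rightarrow(2)$. The plan is to work inside the automorphism group. For a complete toric variety the group $G:=\Aut(X_{\Sigma})$ is a linear algebraic group, and by the description of Demazure and Cox its identity component $G^{\circ}$ is generated by the maximal torus $T$ together with the root subgroups $U_e$, $e\in\mathfrak R$; thus $T$ has dimension $n$ and $\operatorname{Lie}(G^{\circ})=\mathfrak t\oplus\bigoplus_{e\in\mathfrak R}\mathfrak g_e$, where each root space $\mathfrak g_e$ is one-dimensional (spanned by the locally nilpotent derivation $D_e$) and $T$ acts on it through the character $e$, so that a one-parameter subgroup $\lambda\in N$ acts with weight $\langle\lambda,e\rangle$ (compare Lemma~\ref{lndconj}). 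I would take this structural description as the starting point.

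Given an additive action as in $(1)$, let $U\cong\Ga^n$ be the image of the acting group in $G^{\circ}$. It is a commutative unipotent subgroup of dimension $n=\dim X_{\Sigma}$ acting with an open orbit $O\subseteq X_{\Sigma}$; fixing a base point $x_0\in O$, the orbit map identifies $O$ with $\A^n$ and $U$ with the translation group. The goal is to produce a conjugate of $U$ normalized by $T$: once $gUg^{-1}$ is $T$-stable for some $g\in G^{\circ}$, being a connected $T$-stable commutative unipotent subgroup it is a product $U_{e_1}\cdots U_{e_n}$ of root subgroups, and the open-orbit condition forces the roots $e_1,\dots,e_n$ to satisfy $\langle p_i,e_j\rangle=-\delta_{ij}$, i.e.\ to be a complete collection in the sense of Definition~\ref{deff}. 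Theorem~\ref{cc} then converts it into a normalized additive action, which is exactly what is required.

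To normalize $U$ I would use a degeneration inside $G^{\circ}$ driven by $T$. By the Lie--Kolchin theorem a unipotent subgroup is contained in the unipotent radical of a Borel subgroup, so after conjugating we may assume $U\subseteq R_u(B)$ for a Borel $B\supseteq T$, whence $\operatorname{Lie}(U)\subseteq\bigoplus_{e\in\mathfrak R^{+}}\mathfrak g_e$ defines a point of the Grassmannian of $n$-dimensional subspaces of this nilpotent part. Choosing a one-parameter subgroup $\lambda\colon\K^{*}\to T$ which is generic, so that the integers $\langle\lambda,e\rangle$, $e\in\mathfrak R$, are pairwise distinct, I would consider the limit $\mathfrak n_0:=\lim_{t\to 0}\Ad(\lambda(t))\operatorname{Lie}(U)$. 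This limit exists by properness of the Grassmannian and is a fixed point of $\lambda(\K^{*})$, hence a sum of $\lambda$-weight spaces; by the distinctness of the weights it is a sum of individual root spaces $\mathfrak g_e$ and is therefore $T$-stable. Since being an abelian subalgebra is a closed condition and every element of $\bigoplus_e\mathfrak g_e$ is nilpotent, $\mathfrak n_0$ is again the Lie algebra of a commutative unipotent subgroup $U_0\subseteq G^{\circ}$, now normalized by $T$.

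The main obstacle is to check that $U_0$ still acts on $X_{\Sigma}$ with an open orbit, equivalently that the set of roots $e$ with $\mathfrak g_e\subseteq\mathfrak n_0$ remains a complete collection after passing to the limit. The dimension count gives $\dim U_0=n$ for free, so what must be controlled is that the velocity map of $\mathfrak n_0$ at a suitable point is still an isomorphism onto the tangent space; this is where the argument becomes genuinely toric rather than purely group-theoretic. I expect to prove it by following the degenerating base point $\lambda(t)\cdot x_0$ and showing, using the explicit derivations $\prod_{j\neq i}x_j^{\langle p_j,e\rangle}\,\partial/\partial x_i$ from Section~\ref{intrga}, that the evaluation map cannot drop rank in the limit. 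Once the open orbit is preserved, the $T$-normalized subgroup $U_0$ yields a complete collection of Demazure roots and, via Theorem~\ref{cc}, the desired normalized additive action on $X_{\Sigma}$.
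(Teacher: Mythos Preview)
The paper does not give its own proof of this statement: Theorem~\ref{3con} is simply quoted from \cite[Theorem~3]{AR} as background, with no argument supplied. So there is no in-paper proof to compare your proposal against.

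That said, your sketch has a genuine gap at the step you yourself flag. Everything up to the formation of the limit $\mathfrak n_0=\lim_{t\to 0}\Ad(\lambda(t))\,\operatorname{Lie}(U)$ is fine: the Grassmannian limit exists, it is a sum of root lines by the distinct-weight hypothesis, and being abelian is a closed condition, so $\mathfrak n_0$ is the Lie algebra of a $T$-normalized commutative unipotent $U_0$ of the correct dimension. The problem is the passage from ``$U$ has an open orbit'' to ``$U_0$ has an open orbit''. Having an open orbit is an \emph{open} condition on the point of the Grassmannian (it says a certain evaluation map has maximal rank), not a closed one, so it is not automatically preserved under specialization. Your proposed remedy, tracking $\lambda(t)\cdot x_0$, does not obviously help: on a complete toric variety $\lambda(t)\cdot x_0$ typically converges to a torus fixed point on the boundary, where the tangent space is identified with a different set of weights and where many root derivations $D_e$ vanish, so the rank of the velocity map can genuinely drop in the limit. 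Without an additional argument---for instance, an a priori combinatorial constraint on which collections of roots can occur as $\mathfrak n_0$, or a separate proof that some $T$-fixed point is moved in all $n$ directions by the limiting root subgroups---this step is incomplete. In \cite{AR} this implication is handled by a different mechanism (via the structure of $\Aut(X_\Sigma)$ and the Cox realization), and you would need to import a comparable idea to close the gap here.
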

Here we prove a proposition that will be used below.
\begin{defn}
  The \emph{negative octant} of the rational vector space $V$ with respect to a basis $f_1, \ldots, f_n$ is the cone~$\left\{\sum\limits_{i=1}^n \lambda_i f_i \mid \lambda_i \leq 0\right\} \subset V$.
\end{defn}
\begin{stm}\label{ort}
  Let $X_{\Sigma}$ be a complete toric variety.
  The following statements are equivalent.
  \begin{enumerate}
  \item There is a complete collection of Demazure roots of the fan $\Sigma$.
  \item We can order rays of the fan $\Sigma$ in such a way that the primitive vectors on the first $n$ rays form a basis of the lattice $N$ and the remaining rays lie in the negative octant with respect to this basis.
  \item There exists an additive action on $X_{\Sigma}$.
  \end{enumerate}
\end{stm}
\begin{proof}
  Let us prove implication $(1) \Rightarrow (2)$.
  Assume  that the vectors $p_1, \ldots, p_n$ are linearly dependent, i.e. there exists a non-trivial linear relation~${\alpha_1p_1+\ldots+\alpha_np_n=0}$.
  Then we get $-\alpha_i = {\langle \alpha_1p_1+\ldots+ \alpha_np_n, e_i \rangle}=0$ for all $1 \leq i \leq n$, a contradiction.
  Consider an arbitrary vector $v=\sum_{i=1}^n\nu_ip_i$ of the lattice $N$.
  By definition of a complete collection, we get $\langle v, e_i \rangle = -\nu_i \in \Z$.
  Therefore, the vectors $p_1, \ldots, p_n$ form the basis of the lattice $N$.
  
  All other vectors $p_j,\, j>m,$ are equal to $-\sum_{l=1}^n\alpha_{jl}p_l$ for some integer $\alpha_{jl}$.
  By definition of a Demazure root, we obtain
  $$0 \leq \langle p_j, e_i \rangle = \sum \alpha_{jl}\delta_{li}=\alpha_{ji}.$$

  The converse implication is straightforward.

  Equivalence $(1) \Leftrightarrow (3)$ follows from Theorems~\ref{cc} and~\ref{3con}.
\end{proof}

\section{Demazure roots of a variety admitting an additive action}\label{draa}
Let $X_{\Sigma}$ be a complete toric variety of dimension $n$ admitting an additive action and $\Sigma$ be the fan of the variety $X_{\Sigma}$.
Denote primitive vectors on the rays of the fan $\Sigma$ by $p_i$, where $1 \leq i \leq m$.

From Proposition~\ref{ort} it follows that we can order $p_i$ in such a way that the first $n$ vectors form a basis of the lattice $N$ and the remaining vectors $p_j$ $(n < j \leq m)$ are equal to~$\sum_{i=1}^n-\alpha_{ji} p_i$ for some non-negative integers $\alpha_{ji}$.

Let us denote the dual basis of the basis $p_1, \ldots, p_n$ by $p_1^*,\,\ldots, p_n^*$.
\begin{lmm}\label{firstroot}
  Consider $1\leq i\leq n$.
  The set ${\mathfrak R}_i$ is a subset of the set ${-p_i^* + \sum\limits_{l=1,l\neq i}^n\Z_{\geq 0}p_j^*}$ and the vector $-p_i^*$ is a Demazure root from the set~${\mathfrak R}_i$.

 \end{lmm}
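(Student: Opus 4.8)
The plan is to read off both assertions from the coordinates of a root expressed in the dual basis $p_1^*,\ldots,p_n^*$. Since $p_1,\ldots,p_n$ is a basis of the lattice $N$ by the hypothesis on the ordering, its dual basis $p_1^*,\ldots,p_n^*$ is a basis of $M$, and every $e\in M$ admits the expansion $e=\sum_{l=1}^n \langle p_l,e\rangle\,p_l^*$, because $\langle p_k,p_l^*\rangle=\delta_{kl}$. Thus the integers $\langle p_l,e\rangle$ for $1\le l\le n$ are exactly the coordinates of $e$, and controlling them controls $e$ completely.

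For the first assertion I would take $e\in\mathfrak R_i$ with $1\le i\le n$ and simply substitute the defining inequalities. Condition~(1) in the definition of a Demazure root gives $\langle p_i,e\rangle=-1$ and $\langle p_l,e\rangle\ge 0$ for every $l\ne i$ with $1\le l\le n$. Feeding these into the expansion above produces $e=-p_i^*+\sum_{l\ne i}\langle p_l,e\rangle\,p_l^*$ with all coefficients $\langle p_l,e\rangle\in\Z_{\ge 0}$, which is precisely the inclusion $e\in -p_i^*+\sum_{l\ne i}\Z_{\ge 0}\,p_l^*$. This half uses nothing beyond the defining relations of $\mathfrak R_i$.

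For the second assertion I would avoid verifying the two root conditions directly and instead exhibit $-p_i^*$ as a member of the complete collection that produced the chosen ordering. Since $X_\Sigma$ carries an additive action, Theorem~\ref{cc} supplies a complete collection $e_1,\ldots,e_n$ of Demazure roots, and, as in the proof of Proposition~\ref{ort}, the ordering fixed at the start of this section is the one attached to such a collection, so that $\langle p_k,e_j\rangle=-\delta_{kj}$ for $1\le k,j\le n$. Comparing with the dual-basis expansion, the equalities $\langle p_k,e_i\rangle=-\delta_{ki}$ say exactly that $e_i=-p_i^*$. Hence $-p_i^*$ is a genuine Demazure root, and since $\langle p_i,-p_i^*\rangle=-1$ singles out the index $i$ (for $j>n$ one has $\langle p_j,-p_i^*\rangle=\alpha_{ji}\ge 0$, never $-1$), it lies in $\mathfrak R_i$.

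The only real subtlety is in the second assertion, and it is a bookkeeping point rather than a computation: one must ensure that the basis ordering used throughout the section is the one carried by a complete collection, so that the identification $e_i=-p_i^*$ is legitimate. If instead one insists on checking the root conditions for $-p_i^*$ by hand, condition~(1) is the short computation $\langle p_i,-p_i^*\rangle=-1$, $\langle p_l,-p_i^*\rangle=0$ for $l\ne i$ with $l\le n$, and $\langle p_j,-p_i^*\rangle=\alpha_{ji}\ge 0$ for $j>n$ via $p_j=-\sum_l\alpha_{jl}p_l$. The genuine obstacle is then condition~(2), namely that the cone generated by $\sigma$ and $\rho_i$ belongs to $\Sigma$ whenever $\sigma\in\Sigma$ lies in the hyperplane $\{\langle\cdot,p_i^*\rangle=0\}$; this is exactly where completeness of $\Sigma$ and the fact that $\rho_i$ is the unique ray on which $p_i^*$ is positive (all others satisfying $\langle\cdot,p_i^*\rangle\le 0$) would have to be used, and routing the argument through the complete collection is precisely what lets me bypass this geometric verification.
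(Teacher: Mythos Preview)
Your argument is correct. For the first assertion you do exactly what the paper does: expand $e$ in the dual basis and read off the defining inequalities of $\mathfrak R_i$ coming from the rays $p_1,\ldots,p_n$. The paper additionally records the inequalities coming from $p_j$ with $j>n$, namely $\alpha_{ji}-\sum_{l\ne i}\epsilon_l\alpha_{jl}\ge 0$, but these are not needed for the containment and serve only to make the second assertion transparent.

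For the second assertion your route differs from the paper's. The paper simply plugs $\epsilon_l=-\delta_{li}$ into its displayed system and observes that all inequalities hold, so $-p_i^*$ is a root; condition~(2) in the definition of Demazure roots is tacitly absorbed (it is automatic for complete fans, which is the standing hypothesis of the section). You instead invoke the complete collection guaranteed by Theorems~\ref{cc} and~\ref{3con}, note that the ordering fixed at the start of Section~\ref{draa} is precisely the one produced from such a collection in the proof of Proposition~\ref{ort}, and conclude $e_i=-p_i^*$ is already known to be a root. This buys you an honest bypass of condition~(2) at the cost of the bookkeeping you flag: one must read Section~\ref{draa}'s ordering as coming from a complete collection via the chain $(3)\Rightarrow(1)\Rightarrow(2)$ in Proposition~\ref{ort}, not from the converse $(2)\Rightarrow(1)$, to avoid circularity. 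Since that is how the paper actually establishes the ordering, your reading is legitimate and the argument goes through.
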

\begin{proof}
  Let $e=\sum\limits_{i=1}^n\epsilon_ip_i^*$ be a Demazure root from ${\mathfrak R}_i$.
  By the definition, the Demazure roots from $\mathfrak R_i$ are defined by the following equations:
  \begin{equation}
      \begin{array}{cccc}
        \epsilon_i = -1&\\
        \epsilon_l \geq 0,  &l \leq n,\, l \neq i\\
        \alpha_{ji}-\sum\limits_{\substack{l=1\\ l\neq i}}^n \epsilon_l\alpha_{jl} \geq 0,& n < j \leq m
      \end{array}
    \end{equation}
    It is clear that all possible solutions lie in the set ${-p_i^* + \sum\limits_{\substack{l=1\\l\neq i}}^n\Z_{\geq 0}p_l^*}$, and the vector $-p_i^*$ satisfies them.
\end{proof}

Consider the set $\Reg(\mathfrak S) =\{u \in N : \langle u, e \rangle \neq 0 \text{ for all } e \in \mathfrak S\}$.
Any element $u$ from $\Reg(\mathfrak S)$ divides the set of semisimple roots $\mathfrak S$ into two classes as follows:  $${\mathfrak S_{u}^+ =\{e \in \mathfrak S: \langle u, e\rangle > 0\}},\quad {\mathfrak S_{u}^- =\{e \in \mathfrak S: \langle u, e\rangle < 0\}}.$$
At this point, any element of ${\mathfrak S_{u}}^+$ is called \emph{positive} and any element of $\mathfrak S_{u}^-$ is called \emph{negative}.

\begin{stm}\label{selective}
  Let $X_{\Sigma}$ be a complete toric variety admitting an additive action, and ${\mathfrak R = \bigcup_{i=1}^{m} \mathfrak R_i}$ be the set of its Demazure roots.
  Then
  \begin{enumerate}
  \item any element $e \in  \mathfrak R_j, j > n$, is equal to $p_{i_j}^*$ for some $1 \leq i_j\leq n$;
  \item all unipotent Demazure roots lie in the set $\bigcup_{i=1}^{n}\mathfrak R_i$;
  \item there exists a vector $u\in \Reg(\mathfrak S)$ such that $\mathfrak S_{u}^+ \subset \mathfrak \bigcup_{i=1}^n \mathfrak R_i$.
  \end{enumerate}
\end{stm}
\begin{proof}
  We start with the first statement.
  Consider a root~${e =\sum\limits_{i=1}^n \epsilon_i p_i^* \in \mathfrak R_j}$, where $j>n$.
  By definition of Demazure roots, we have $-\langle p_j, e\rangle=\sum\limits_{i=1}^n\alpha_{ji}\epsilon_i=1$ and $\epsilon_i \geq 0$ for all $1\leq i\leq n$.
  Consider the set ${I_j = \{i : \alpha_{ji} > 0\}}$.
  Then there exists $s \in I_j$ such that $\epsilon_s=1$ and for all $l \in I_j \setminus \{s\}$ the equality $\epsilon_l=0$ holds.
  Since $X_{\Sigma}$ is complete, there is no half-space with all vectors $p_i$ inside.
  Hence, for all $l \in \{1, \ldots, n\} \setminus I_j$ there exists  $r > n$ such that $\alpha_{rl} > 0$.
  Since $\langle p_r, e\rangle = -\sum\limits_{i=1}^n\alpha_{ri}\epsilon_i \geq 0$, we have $\epsilon_l=0$.
  This implies $e=p_s^*$.
  The first statement is proved.

  Let us prove the second statement.
  As above, consider the root ${e = p_{i_j}^* \in \mathfrak R_j}$, $j>n$.
  From  the first statement of Proposition~\ref{selective} and Lemma~\ref{firstroot} it follows that the element $-e$ is a root and lies in $\mathfrak R_{i_j}$ for some $i_j$.
  This means that the root $e$ is semisimple.
  Hence, all unipotent roots lie in the set $\bigcup_{i=1}^n \mathfrak R_i$.

  To prove (3), we should find a vector $u$ from the set $\Reg(\mathfrak S)$ such that  the set $\bigcup_{j=n+1}^m\mathfrak R_j$ contains only negative roots.
  Consider the vector $u_0 = -\sum\limits_{i=1}^n p_i$.
  For every root $e\in \bigcup_{j=n+1}^m\mathfrak R_j$, we get the  inequality $\langle u_0, e \rangle = -1 < 0$.
  We can add a small rational vector $\Delta u = \frac{1}{Q}\Delta u' \in N_{\Q}$, where $\Delta u' \in N$ and $Q$ is a positive integer such that the inequality~$\langle u_0+\Delta u, e\rangle_{\Q} <0$ holds for all roots $e \in \bigcup_{i=n+1}^m \mathfrak R_i$.
  So, we have $Q(u_0+\Delta u) \in \Reg(\mathfrak S)$, and we obtain the required vector~$u:=Q(u_0+\Delta u)$.
\end{proof}

\section{Main results}\label{mainsection}
\begin{wrapfigure}{i}{0.22\textwidth}
\begin{picture}(90,90)
  \put(50,50){\vector(1,0){40}}
  \put(50,50){\vector(0,1){40}}
  \qbezier[40](0,0)(25,25)(50,50)
  \put(50,50){\line(0,-1){45}}
  \put(85,55){$p_1$}
  \put(55,85){$p_2$}
  \put(50,50){\line(-1,0){45}}
  \put(50,50){\vector(-3,-1){50}}
  \put(50,50){\vector(-1,-3){20}}
  \put(17,33){$A_{I}$}
  \put(32,18){$A_{II}$}
\end{picture}
\end{wrapfigure}

We consider a complete toric surface $X_{\Sigma}$ with the fan $\Sigma$ admitting an additive action.
Denote primitive vectors of the rays of the fan~$\Sigma$ by $p_1, \ldots, p_m$.
We assume that $p_1, p_2$ is the standard basis of $N_{\Q}$. 

\begin{defn}\label{defwide}
  Let us call a fan $\Sigma$ \emph{wide} if it satisfies one of the following equivalent conditions:
\begin{enumerate}
  \item There exist $2 < i_1, i_2 \leq m$ such that~${a_{i_11} > a_{i_12}}$ and ${a_{i_21} < a_{i_22}}$;
  \item $\mathfrak R_1 = \{-p_1^*\}$ and $\mathfrak R_2=\{-p_2^*\}$.
  \end{enumerate}
\end{defn}
\begin{proof}[Proof of Equivalence.]
  From the definition of Demazure roots it follows that
$$\mathfrak{R}_1 = \left\{(-1, k) : 0 \leq k \leq \min_{j>2}\left(\frac{\alpha_{j1}}{\alpha_{j2}}\right)\right\}, \quad \mathfrak{R}_2 = \left\{(k, -1) : 0 \leq k \leq \min_{j>2}\left(\frac{\alpha_{j2}}{\alpha_{j1}}\right)\right\}.$$

  From this it follows that $|\mathfrak{R}_1| =\left\lfloor \min\limits_{j>2}\left(\dfrac{\alpha_{j1}}{\alpha_{j2}}\right) \right\rfloor+ 1$, $|\mathfrak{R}_2| = \left\lfloor\min\limits_{j>2}\left(\dfrac{\alpha_{j2}}{\alpha_{j1}}\right)\right\rfloor + 1$.
  This implies the equivalence.
\end{proof}

Let us consider two areas in $N_{\Q}$: $${A_{I} = \{(x, y) \in M_{\Q} : x \leq 0, y \leq 0, x < y\}},$$
$${A_{II} = \{(x, y) \in M_{\Q} : x \leq 0, y \leq 0, x > y\}}.$$
The first condition from the definition of a wide fan means that there is a ray of $\Sigma$ in the area $A_{I}$ and there is a ray in the area $A_{II}$.

\smallskip

Now we are ready to formulate the main theorem.
\begin{theorem}\label{main}
Let $X_{\Sigma}$ be a complete toric surface  admitting an additive action.
  Then there is only one additive action on $X_{\Sigma}$ if and only if the fan $\Sigma$ is wide; otherwise there exist two non-isomorphic additive actions, one is normalized and the other is not.
\end{theorem}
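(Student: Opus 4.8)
The plan is to pass to the Cox ring and classify the lifted $\Ga^2$-actions. By Theorems~\ref{cc} and~\ref{3con} the surface $X_{\Sigma}$ carries a normalized additive action, and by the theorem of~\cite{AR} recalled in the introduction all normalized additive actions are isomorphic, so they contribute a single isomorphism class; it is realized by the complete collection $\{-p_1^*,-p_2^*\}$, whose derivations $D_{-p_1^*}=\prod_{j>2}x_j^{\alpha_{j1}}\frac{\partial}{\partial x_1}$ and $D_{-p_2^*}=\prod_{j>2}x_j^{\alpha_{j2}}\frac{\partial}{\partial x_2}$ are roots by Lemma~\ref{firstroot} and commute. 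The theorem thus splits into two claims: if $\Sigma$ is wide, every additive action is isomorphic to this one; if not, there is exactly one further class, and it is non-normalized. Every additive action lifts to a $\Ga^2$-action on $\overline{X}_{\Sigma}=\K^m$ commuting with $H_X$, i.e. to a two-dimensional abelian subalgebra $\mathfrak a$ of pairwise commuting, $\Cl(X_{\Sigma})$-homogeneous, degree-zero locally nilpotent derivations of $\K[x_1,\dots,x_m]$. Since $\mathrm{Lie}(\Aut(X_{\Sigma}))$ is spanned by the torus and the root derivations $D_e$, I may take the elements of $\mathfrak a$ in the span of the $D_e$, writing a basis as $\delta_k=\sum_{e\in\mathfrak R}c_{k,e}D_e$; classifying additive actions up to isomorphism then amounts to classifying these $\mathfrak a$ up to conjugation by $H_X$, the torus $\mathbb T$, and the root subgroups.

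First I would bring $\mathfrak a$ into standard position. Using the vector of Proposition~\ref{selective}(3) (or directly $u_0=-p_1-p_2$) as a height function $e\mapsto\langle u_0,e\rangle$ on $\mathfrak R$, one checks that $-p_1^*$ and $-p_2^*$ are the unique roots of maximal height, while by Proposition~\ref{selective}(2) all unipotent roots lie in $\mathfrak R_1\cup\mathfrak R_2$ and by Proposition~\ref{selective}(1) every far-ray root equals some $p_i^*$. A filtration argument then shows that the open-orbit condition forces the top-height components of a suitable basis $\delta_1,\delta_2$ to be normalized to $D_{-p_1^*}$ and $D_{-p_2^*}$: these leading terms must form a complete collection in the sense of Definition~\ref{deff}, and Lemma~\ref{firstroot} leaves no other candidate in the negative octant. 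It remains to classify the lower-height correction terms of $\delta_1,\delta_2$ modulo conjugation.

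The dichotomy then comes down to which correction terms are available. If $\Sigma$ is not wide, then by Definition~\ref{defwide} one of $\mathfrak R_1,\mathfrak R_2$ is strictly larger than $\{-p_i^*\}$; say $(-1,1)\in\mathfrak R_1$. I would exhibit the non-normalized action directly: $\delta_1=D_{-p_1^*}$ and $\delta_2=D_{-p_2^*}+D_{(-1,1)}$ commute and are locally nilpotent, their leading terms form the complete collection, and by Lemma~\ref{lndconj} the torus $\mathbb T$ scales $D_{-p_2^*}$ and $D_{(-1,1)}$ by distinct characters, so the line $\K\delta_2$, hence the subgroup $\langle\delta_1,\delta_2\rangle$, is not normalized by $\mathbb T$. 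Conversely, running the relation $[\delta_1,\delta_2]=0$ against the explicit sets $\mathfrak R_1=\{(-1,k)\}$, $\mathfrak R_2=\{(k,-1)\}$ and the roots $p_i^*$, one shows that every admissible correction can be conjugated either to $0$ (the normalized action) or to this single canonical form, so there are at most two classes. If $\Sigma$ is wide, then $\mathfrak R\subseteq\{-p_1^*,-p_2^*,p_1^*,p_2^*\}$, and I would verify that no combination involving $\pm p_i^*$ produces a new two-dimensional abelian subalgebra of nilpotents with open orbit; hence $\mathfrak a=\langle D_{-p_1^*},D_{-p_2^*}\rangle$ up to conjugation and the action is unique.

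Finally, in the non-wide case the two classes must be separated. Being normalized by a maximal torus is invariant under isomorphism of actions, so it suffices to see that the canonical non-normalized action is normalized by no maximal torus of $\Aut(X_{\Sigma})^0$; concretely this can be read from the induced action on the boundary $X_{\Sigma}\setminus U$, where $U$ denotes the open orbit: for the normalized action the boundary divisors are fixed or permuted torically, whereas the extra summand $D_{(-1,1)}$ makes the group act non-trivially along a boundary divisor and shrinks its fixed locus there. I expect the main obstacle to be the rigidity part of the previous paragraph, namely proving that in the wide case there is genuinely no non-normalized action and that in the non-wide case the corrections yield \emph{no} conjugacy class beyond the two exhibited. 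The explicit construction of the non-normalized action and the separating invariant are comparatively direct; the hard work is the uniform control of all lower-order corrections through the commutation relations, using the limited supply of roots guaranteed by Proposition~\ref{selective}.
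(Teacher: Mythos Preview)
Your overall strategy---lift to the Cox ring, work with two-dimensional abelian subalgebras spanned by root derivations, normalize leading terms, and classify the tails---is the same as the paper's. But there is a genuine structural gap and one point where your argument diverges from the paper in a way that is not obviously repairable as written.

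The gap is the reduction to a fixed maximal unipotent subgroup. You write a basis of $\mathfrak a$ as $\sum_{e\in\mathfrak R}c_{k,e}D_e$, summing over \emph{all} Demazure roots, and justify this by ``$\mathrm{Lie}(\Aut(X_{\Sigma}))$ is spanned by the torus and the $D_e$, so I may take the elements of $\mathfrak a$ in the span of the $D_e$.'' That inference is false: a nilpotent element of a reductive Lie algebra need not have vanishing Cartan component in a given root decomposition (already in $\mathfrak{sl}_2$). The paper handles this by first conjugating the unipotent group $\Ga^2$ into a fixed maximal unipotent subgroup $U\subset\Aut(X_{\Sigma})$, so that $\mathfrak a\subset\bigoplus_{e\in\mathfrak R^+}\K D_e$ with $\mathfrak R^+=\mathfrak S^+_u\cup\mathfrak U$ for a suitable $u$. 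This is not optional: it is what makes the span finite, kills the torus component, and eliminates the opposite semisimple roots $p_i^*$ from the picture before you ever compute commutators. Once inside $U$, the paper shows that one may assume $\mathfrak R_1^+=\{(-1,0)\}$ and $\mathfrak R_2^+=\{(k,-1):0\le k\le d\}$, and your ``height filtration'' and ``open-orbit forces the leading terms'' become the concrete Lemma~\ref{sumform}.

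On the separation of the two classes in the non-wide case, your proposed invariant (``normalized by some maximal torus'') is correct in spirit but you do not prove it; appealing to ``induced action on the boundary'' is not a proof. The paper instead produces an isomorphism invariant internal to the Cox ring: it looks at the homogeneous component $C$ of $R(X_{\Sigma})$ containing $x_2$ and compares the family of annihilator subspaces $\{v\in\mathfrak a:vf=0\}$ as $f$ ranges over $C\setminus\{0\}$; for the normalized action this family contains a $\P^1$ of lines, while for the non-normalized action it consists only of $\K D_2$ and $0$. This is a clean invariant that does not require analyzing maximal tori of $\Aut(X_{\Sigma})$. Finally, the part you yourself flag as ``the hard work''---showing that every correction conjugates to either $0$ or a single canonical tail---is carried out in the paper by an explicit unipotent conjugation (solving a triangular linear system in Lemma~\ref{conj}) followed by a torus rescaling (Lemma~\ref{nnaisom}); your proposal asserts this step rather than performing it.
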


\begin{proof}[Proof of Theorem~\ref{main}]
  We are going to classify additive actions on $X_{\Sigma}$ by describing two-dimensional subgroups of a maximal unipotent subgroup $U$ of the automorphism group $\Aut(X_{\Sigma})$ up to conjugation in $\Aut(X_{\Sigma})$.
  
  Fix a vector $u \in \Reg(\mathfrak S)$ that satisfies assertion~(3) of Proposition \ref{selective}.
  Hereafter we write $\mathfrak S^+$ instead of $\mathfrak S^+_{u}$.
  Denote the set $\mathfrak S^+ \cup \mathfrak U$ by $\mathfrak R^+$.
  From Proposition~\ref{selective} it follows that $\mathfrak R^+$ lies in the set $\bigcup_{i=1}^n\mathfrak R_i$.
All the one-parameter subgroups of roots from $\mathfrak R^+$ generate the maximal unipotent subgroup~$U$ in the group~$\Aut(X_{\Sigma})$, see~\cite[Proposition 4.3]{Cox}.
Denote the set ${\mathfrak R^+ \cap \mathfrak R_i}$ by $\mathfrak R^+_i$.

\begin{lmm}
  There exists $i\in \{1, 2\}$ such that $|\mathfrak R^+_i|=1$.
  Moreover, $\max_{i=1,2}|\mathfrak R_i^+|=\max_{i=1,2}|\mathfrak R_i|$.
\end{lmm}
\begin{proof}
  From the definition of Demazure roots it follows that
    $$\mathfrak{R}_1 = \left\{(-1, k) : 0 \leq k \leq \min_{j>2}\left(\frac{\alpha_{j1}}{\alpha_{j2}}\right)\right\}, \quad \mathfrak{R}_2 = \left\{(k, -1) : 0 \leq k \leq \min_{j>2}\left(\frac{\alpha_{j2}}{\alpha_{j1}}\right)\right\}.$$
  
    We have $|\mathfrak R_1|>1, |\mathfrak R_2|>1$ simultaneously if and only if
  \begin{gather*}
    {\mathfrak R}_1 = \{(-1, 0), (-1, 1) \}\\
    {\mathfrak R}_2 = \{(0, -1), (1, -1)\}.
  \end{gather*}
  Since the roots $(-1, 1), (1, -1)$ are opposite to each other, only one of them can lie in~$\mathfrak R^+$.

  \smallskip

  Only the roots $(-1, 1), (1, -1)$ can lie in the set $(\mathfrak R_1 \cap -\mathfrak R_2) \cup (\mathfrak R_2 \cap -\mathfrak R_1)$.
  Thus, we have $|\mathfrak R_1^+|=1$, $\mathfrak R_2^+=\mathfrak R_2$ or $|\mathfrak R_2^+|=1$, $\mathfrak R_1^+=\mathfrak R_1$.
\end{proof}
Without loss of generality, it can be assumed that $|\mathfrak R^+_1|=1$.
Denote the cardinality of the set $\mathfrak R^+_2$ by $d+1$.
By Definition~\ref{defwide} the fan is wide if and only if $d$ is equal to $0$.
In there term we have $\mathfrak R_1^+ = \{(-1, 0)\}$ and $\mathfrak R_2^+ = \{(k, -1) : 0 \leq k \leq d\}$.
Denote LND that corresponds to the root $(-1, 0) \in \mathfrak R_1^+$ by $\delta$, and LNDs that correspond to roots  $(k, -1) \in \mathfrak R^+_2, 0 \leq k \leq d$ by $\partial_k$.
\begin{lmm}\label{Lie}
  The following equations hold:
  $$
    [\delta, \partial_k] = k \partial_{k-1},
    \quad\quad
    [\partial_k, \partial_{k'}]=0.
  $$
\end{lmm}
\begin{proof}
  In this proof we use notation introduced in Section~\ref{coxring}.
  The correspondence between Demazure roots and LNDs implies:
  
  $$
    \delta=\prod\limits_{j=3}^m x_j^{\alpha_{j1}}\frac{\partial}{\partial x_1}, \quad \partial_k=x_1^k\prod\limits_{j=3}^mx_j^{\alpha_{j2}-k\alpha_{j1}}\frac{\partial}{\partial x_2}\\
  $$
  It can be easily checked that the derivations $\partial_k$ commute with each other.
  Moreover, direct computations show that the commutator  $[\delta, \partial_k]$ is equal to the derivation $k\partial_{k-1}$.
\end{proof}
    
Let us find all commutative subgroups in the group~$U$ that correspond to additive actions.
Such groups are in bijection with some pairs $(D_1, D_2)$ of commuting LNDs.
Note that not every pair of commuting LNDs corresponds to an additive action.
\begin{lmm}\label{sumform}
  In the above terms there is an invertible linear operator $\phi$ on the vector space $\langle D_1, D_2\rangle$ that sends the derivations $D_1, D_2$ to
  \begin{equation}\label{sum}
    \begin{cases}
      \phi(D_1)=\delta + \sum\limits_{k=0}^d\mu_k\partial_k\\
      \phi(D_2)=\partial_0
    \end{cases}
    ,\quad\quad \mu_k \in \K
  \end{equation} 
\end{lmm}
\begin{proof}
  Every pair of derivations has the form $D_1 = \lambda^{(1)}\delta+\sum \mu_k^{(1)}\partial_k$ and $ D_2 = \lambda^{(2)}\delta+\sum \mu_k^{(2)}\partial_k$.
  If $\lambda^{(1)}=\lambda^{(2)}=0$ then dimension of the orbit in the total space $X_{\Sigma}$ is less than~2 and the orbit can not become open after the factorization $\widehat{X_{\Sigma}} \to X_{\Sigma}$.
  Thus, without loss of generality we can assume that $\lambda^{(1)}\neq 0$.
  We can convert derivations $D_1, D_2$ to the  form $\delta+\sum \mu_k^{(1)}\partial_k, \sum \mu_k^{(2)}\partial_k$.
  From Lemma~\ref{Lie} it follows that the derivations $D_1, D_2$  commute if and only if~$\mu_k^{(2)}=0$ for~$k>0$.
  Thus, we can convert derivations $D_1, D_2$ to the form $\delta+\sum \mu_k^{(1)}\partial_k, \mu_0^{(2)}\partial_0$, with $\mu_0^{(2)}\neq 0$.
  We can assume that $\mu_0^{(2)}=1$.
\end{proof}
\begin{lmm}\label{cor}
  Every pair of derivations of form \eqref{sum} corresponds to an additive action.
\end{lmm}
\begin{proof}
  Let us consider the $\Ga^2$-action corresponding to the LNDs $D_1, D_2$.
  We  prove that the group~$\Ga^2 \times H_{X_{\Sigma}}$ acts in the total space $\K^m$ with an open orbit.
  By construction, the group~$\Ga^2$ changes exactly two of the coordinates $x_1,\ldots,x_m$, while the weights of the remaining $m-2$ coordinates with respect to the $\Cl(X)$-grading form a basis of the lattice of characters of the torus $H_X$.
  From this it follows that  there exists a point $p \in \K^m$ with trivial stabilizer.
  Due to $\dim(\Ga^2 \times H_{X_{\Sigma}}) = m$ we get that the orbit of the point $p$ is open.
\end{proof}
Hereafter, we suppose that $D_1, D_2$ have form \eqref{sum}.
From Lemma~\ref{sumform} it follows that if $d=0$, then derivations $D_1, D_2$ can be converted to $\delta, \partial_0$ respectively.
Such LNDs correspond to a normalized additive action and every additive action is isomorphic to this action.

\smallskip
Hereafter, we assume that $d\neq 0$.
\begin{lmm}\label{conj}
  There exists an automorphism $\psi \in \Aut(R(X_{\Sigma}))$  that conjugates $D_1, D_2$ to the form
  \begin{equation}\label{final}
    \begin{cases}
      \psi(D_1)=\delta + \mu_d\partial_d\\
      \psi(D_2)=\partial_0
    \end{cases}
  \end{equation}
\end{lmm}
\begin{proof}
  We are going to find numbers $\eta_k \in \K$ such that the automorphism $\psi = \exp(\delta + \sum\limits_{k=1}^d\eta_k\partial_k)$ is the desired one.
  
  The automorphism $\psi$ conjugates LNDs $D_1, D_2$ to the form
  \begin{gather*}
    \exp(\delta +\sum\limits_k\eta_k \partial_{k})D_1\exp(-\delta -\sum\limits_k\eta_k\partial_k)=\\
    =\Ad\left(\exp\left(\delta +\sum\limits_k\eta_k \partial_{k}\right)\right)D_1 = \exp\left(\ad\left(\delta +\sum\limits_k\eta_k \partial_k\right)\right)D_1 =\\
    =D_1 + \sum\limits_{l=1}^{\infty}\frac{\ad\left(\delta +\sum\limits_k\eta_k \partial_{k}\right)^l}{l!}D_1
    =\delta + \sum\limits_{k=0}^d\left(\mu_k + \sum\limits_{l=1}^{d-k}\frac{(k+l)!}{k!}(-\mu_{k+l}+\eta_ {k+l})\right)\partial_k;\\\
    \exp(\delta +\sum\limits_k\eta_k \partial_{k})D_2\exp(-\delta -\sum\limits_k\eta_k\partial_k)=D_2.
  \end{gather*}
  Here, we get the system of linear equations
  $$\mu_k + \sum\limits_{l=1}^{d-k}\dfrac{(k+l)!}{k!}(-\mu_{k+l}+\eta_{k+l})=0,\; 0 \leq k \leq d-1,$$
  in variables $\eta_1, \ldots, \eta_d$.
  This system has a unique solution as an upper triangular system and it is the solution  we are looking for.
\end{proof}
Hereafter, we suppose that $D_1, D_2$ have form~\eqref{final}.
Thus, we have a family of additive actions parameterized by the number $\mu_d$:
\begin{equation}
  \arraycolsep=0.3pt
  \begin{array}{ccccl}
    x_1 &\to& \exp(s_1D_1+s_2D_2)x_1&=&x_1 +s_1\prod\limits_{j=3}^m x_j^{\alpha_{j1}}\\
    x_2 &\to& \exp(s_1D_1+s_2D_2)x_2&=&x_2 + (s_2 + \frac{\mu_ds_1^d}{d!})\prod\limits_{j=3}^mx_j^{\alpha_{j2}}+\sum\limits_{k=1}^{d}\frac{\mu_ds_1^{d-k}}{k!}x_1^k\prod\limits_{j=3}^mx_j^{\alpha_{j2}-k\alpha_{j1}}\\
  \end{array}
\end{equation}
Note that every action corresponding to the pair of LNDs of form~\eqref{final} acts on $x_j, 3\leq j \leq m$ identically.
\begin{lmm}\label{nnaisom}
  All additive actions with $\mu_d \neq 0$ are non-normalized and isomorphic to each other.
\end{lmm}
\begin{proof}
  We conjugate the pair of LNDs that have form~\eqref{final} by an element $t$ of the maximal torus $\mathbb{T} = (\K^*)^m$.
  Using Lemma~\ref{lndconj} we obtain
  \begin{gather*}
    tD_1t^{-1}=\overline{\chi}^{(-1,0)}(t)\delta + \mu_d\overline{\chi}^{(d,-1)}(t)\partial_d \\
    tD_2t^{-1}=\overline{\chi}^{(0,-1)}(t)\partial_0
  \end{gather*}
  Since $\overline{\chi}^{(-1,0)} \neq \overline{\chi}^{(d,-1)}$  we can conjugate an additive action with $\mu_d \neq 0$ to the additive action with $\mu_d=1$.
\end{proof}

From the last lemma it follows that there are two classes of additive actions.
The first one ($\mu_d=0$) is a normalized additive action:
\begin{equation}\label{NA}
  \arraycolsep=0.3pt
  \begin{array}{ccccl}
    x_1 &\to&x_1 +s_1\prod\limits_{j=3}^m x_j^{\alpha_{j1}}\\
    x_2 &\to&x_2 + s_2\prod\limits_{j=3}^mx_j^{\alpha_{j2}}.\\
  \end{array}
\end{equation}\label{NNA}
The second is a non-normalized additive action:
\begin{equation}
  \arraycolsep=0.3pt
  \begin{array}{ccl}
    x_1 &\to&x_1 +s_1\prod\limits_{j=3}^m x_j^{\alpha_{j1}}\\
    x_2 &\to&x_2 + (s_2 + \frac{s_1^d}{d!})\prod\limits_{j=3}^mx_j^{\alpha_{j2}}+\sum\limits_{k=1}^{d}\frac{s_1^{d-k}}{k!}x_1^k\prod\limits_{j=3}^mx_j^{\alpha_{j2}-k\alpha_{j1}}.\\
  \end{array}
\end{equation}

\begin{lmm}\label{notisom}
  Actions~\eqref{NA} and~\eqref{NNA} are not isomorphic.
\end{lmm}
\begin{proof}
  Let us consider the homogeneous component of $\K[\overline{X}]$ containing $x_2$:
  $$C = \langle x_2 \rangle \oplus \spann\{x_1^{k}\prod_{j=3}^m x_j^{\alpha_{j2}-k\alpha_{j1}} : 0\leq k \leq d\}.$$

  We consider the space $V =  \{s_1D_1+s_2 D_2 : s_1, s_2 \in \K\}$ and its subspace 
  $$\Ann_Vf = \{v \in V : vf = 0\}, \, f \in C.$$
  Let $f = \lambda x_2 + \sum\limits_{k=0}^d \lambda_k x_1^{k}\prod\limits_{j=3}^m x_j^{\alpha_{j2}-k\alpha_{j1}}$ be an arbitrary non-zero element of $C$.

  In the case of normalized action $(s_1D_1 + s_2D_2)f$ is equal to
  $$s_2\lambda \prod_{j=3}^m x_j^{\alpha_{j2}} + s_1 \sum_{k=1}^d\lambda_kkx_1^{k-1}\prod_{j=3}^m x_j^{\alpha_{j2}-(k-1)\alpha_{j1}}.$$
  Elements of $\Ann_V f$ are defined by the following equations:
    \begin{equation}
      \begin{array}{cc}
        \lambda s_2 +\lambda_1s_1 = 0&\\
        \lambda_ks_1 = 0,& 2 \leq k \leq d
      \end{array}
    \end{equation}
    
    The collection of subspaces $\Ann_V f$, where $f \in C\setminus \{0\}$, contains a family of lines~${\{s_1D_1+ s_2D_2 : \lambda_1 s_1+ \lambda s_2=0\}}, (\lambda : \lambda_1) \in \P^2$.

    In the case of non-normalized action $(s_1D_1 + s_2D_2)f$ is equal to
  $$s_2\lambda \prod_{j=3}^m x_j^{\alpha_{j2}} +s_1 \lambda x_1^{d}\prod_{j=3}^m x_j^{\alpha_{j2}-d\alpha_{j1}}+ s_1 \sum_{k=1}^d\lambda_kkx_1^{k-1}\prod_{j=3}^m x_j^{\alpha_{j2}-(k-1)\alpha_{j1}}.$$
  Elements of $\Ann_V f$ are defined by the following equations:
    \begin{equation}
      \begin{array}{cc}
        \lambda s_2 + \lambda_1s_1 = 0&\\
        \lambda_ks_1 = 0 ,&2 \leq k \leq d\\
        \lambda s_1=0&\\
      \end{array}
    \end{equation}
    The subspace $\Ann_V f$ for $f \in \C \setminus \{0\}$ can be either $\K D_2$ or $0$.

    Hence, actions~\eqref{NA} and~\eqref{NNA} are not isomorphic.
  \end{proof}
  \begin{remark}
    The idea of this proof is taken from the proof \cite[Theorem~1]{ABZ}.
  \end{remark}

  \smallskip
  In the case of a wide fan Theorem~\ref{main} follows from Lemmas \ref{sumform} and \ref{cor}.
  In the case of a non-wide fan we obtain the assertion  from Lemmas \ref{cor}-\ref{notisom}.
Theorem~\ref{main} is proved.
\end{proof}
\section{Examples and problems}\label{example}
In this section, we describe some examples illustrating  Theorem~\ref{main}.

\begin{exmp} Let us consider the surface $\P^1\times \P^1$.
  Its fan is wide and there is only one additive action up to isomorphism.

  \begin{multicols}{3}
    \begin{picture}(90,90)
      \put(50,50){\vector(1,0){40}}
      \put(50,50){\vector(0,1){40}}
      \put(50,50){\vector(-1,0){40}}
      \put(50,50){\vector(0,-1){40}}      
      \put(85,55){$p_1$}
      \put(55,85){$p_2$}
      \put(15,55){$p_3$}
      \put(55,15){$p_4$}

    \end{picture}

    \columnbreak

    $\arraycolsep=0.0pt\begin{array}{c}
  \mathfrak R_1 = \{(-1, 0)\}\\
  \mathfrak R_2 = \{(0, -1)\}\\
  \mathfrak R_3 = \{(1, 0)\}\\
  \mathfrak R_4 = \{(0, 1)\}\\
  \mathfrak R^+=\{(-1, 0), (0, -1)\}
 \end{array}$

\columnbreak

\begin{center}
  Normalized action:
  
  $\arraycolsep=0.0pt  \begin{array}{ccc}
    x_1 &\to& x_1 + s_1 x_3\\
    x_2 &\to& x_2 + s_2 x_4\\
    x_3 &\to& x_3\\
    x_4 &\to& x_4\\
  \end{array}  $

$(s_1, s_2) \in \Ga^2$
  \end{center}
  
\end{multicols}

\end{exmp}
\newpage
\begin{exmp} Let us consider the surface corresponding to the following fan with ${p_3=-p_1-2p_2, p_4=-2p_1-p_2}$.
  Its fan is wide and there is only one additive action up to isomorphism.

  \begin{multicols}{3}
    \begin{picture}(90,110)
      \put(50,70){\vector(1,0){40}}
      \put(50,70){\vector(0,1){40}}
      \put(50,70){\vector(-1,-2){40}}
      \put(50,70){\vector(-2,-1){80}}
      \qbezier[40](0,20)(25,45)(50,70)
      \put(85,75){$p_1$}
      \put(55,105){$p_2$}
      \put(10,60){$p_3$}
      \put(35,30){$p_4$}

    \end{picture}

    \columnbreak

    $\begin{array}{c}
  \mathfrak R_1 = \{(-1, 0)\}\\
  \mathfrak R_2 = \{(0, -1)\}\\
  \mathfrak R_3 = \emptyset\\
  \mathfrak R_4 = \emptyset\\
  \mathfrak R^+=\{(-1, 0), (0, -1)\}
 \end{array}$

\columnbreak

\begin{center}
  Normalized action:
$
 \arraycolsep=0.0pt\begin{array}{ccc}
   x_1 &\to& x_1 + s_1 x_3x_4^2\\
   x_2 &\to& x_2 + s_2 x_3^2x_4\\
   x_3 &\to& x_3\\
    x_4 &\to& x_4\\
    \end{array}
    $
    
$(s_1, s_2) \in \Ga^2$
  \end{center}  

\end{multicols}

\end{exmp}

\begin{exmp} Let us consider the projective plane $\P^2$.
  It corresponds to the following fan with ${p_3=-p_1-p_2}$.
  This fan is not wide.
  Therefore there are two additive actions up to isomorphism.

  \begin{multicols}{3}
    \begin{picture}(90,90)
      \put(50,50){\vector(1,0){40}}
      \put(50,50){\vector(0,1){40}}
      \put(50,50){\vector(-1,-1){40}}      
      \put(85,55){$p_1$}
      \put(55,85){$p_2$}
      \put(5,25){$p_3$}
    \end{picture}
    
    \columnbreak

    $\arraycolsep=0.0pt
    \begin{array}{c}
  \mathfrak R_1 = \{(-1, 0), (-1, 1)\}\\
  \mathfrak R_2 = \{(0, -1), (1, -1)\}\\
  \mathfrak R_3 = \{(1, 0), (0, 1)\}\\
  \mathfrak R^+=\{(-1, 0), (0, -1), (1, -1)\}
 \end{array}$

 \columnbreak

 \begin{center}
   Normalized action:
$\arraycolsep=1.4pt
  \begin{array}{ccc}
   x_1 &\to& x_1 + s_1 x_3\\
   x_2 &\to& x_2 + s_2 x_3\\
   x_3 &\to& x_3\\
    \end{array}
    $
    
$(s_1, s_2) \in \Ga^2$
\end{center}

\begin{center}
    Non-normalized action:
$\arraycolsep=0.0pt
  \begin{array}{ccc}
   x_1 &\to& x_1 + s_1 x_3\\
   x_2 &\to& x_2 +\frac{2s_2 + s_1^2}2 x_3 + s_1 x_1\\
   x_3 &\to& x_3\\
    \end{array}
    $
    
$(s_1, s_2) \in \Ga^2$
\end{center}

\end{multicols}

\end{exmp}

\begin{exmp} Let us consider Hirzebruch surface $\mathbb F_1$.
  It corresponds to the following fan with ${p_3=-p_1-p_2, p_4=-p_2}$.
  This fan is not wide.
  Therefore there are two additive actions up to isomorphism.
    \begin{multicols}{3}
    \begin{picture}(100,100)
      \put(50,50){\vector(1,0){40}}
      \put(50,50){\vector(0,1){40}}
      \put(50,50){\vector(0,-1){40}}
      \put(50,50){\vector(-1,-1){40}}
      \put(85,55){$p_1$}
      \put(55,85){$p_2$}
      \put(55,15){$p_4$}
      \put(5,25){$p_3$}

    \end{picture}
    
\columnbreak

$
\arraycolsep=0.0pt
\begin{array}{c}
  \mathfrak R_1 = \{(-1, 0)\}\\
  \mathfrak R_2 = \{(0, -1), (1, -1)\}\\
   \mathfrak R_3 = \{(1, 0)\}\\
  \mathfrak R_4 = \emptyset\\
  \mathfrak R^+=\{(-1, 0),   (0, -1),  (1, -1)\}
 \end{array}$

\columnbreak

Normalized action:
\begin{center}
$\arraycolsep=0.pt
  \begin{array}{ccc}
   x_1 &\to& x_1 + s_1 x_3\\
   x_2 &\to& x_2 + s_2 x_3x_4\\
   x_3 &\to& x_3\\
x_4 & \to& x_4
    \end{array}
    $

     $(s_1, s_2) \in \Ga^2$
\end{center}

Non-normalized action:
\begin{center}
  $\arraycolsep=0.0pt
  \begin{array}{llc}
    x_1 &\to& x_1 + s_1 x_3\\
    x_2&\to& x_2 + \frac{2s_2+s_1^2}2x_3x_4 +s_1 x_1x_4\\
    x_3 &\to& x_3\\
    x_4 &\to& x_4
  \end{array}$

$(s_1, s_2) \in \Ga^2$
\end{center}

\end{multicols}
For a geometric realization of these two actions, see \cite[Propostion~5.5]{HT}.
\end{exmp}

Finally, let us outline some problems for further research.
\begin{prob}
Classify additive actions on complete non-toric normal surfaces.
\end{prob}
Examples of additive actions on singular del Pezzo surfaces can be found in \cite{DL}.

\bigskip

The case of 3-dimensional toric varieties seems to be more complicated: by Hassett-Tschinkel correspondence, we have four non-isomorphic additive actions on $\P^3$, see \cite[Proposition~3.3]{HT}.
Nevertheless, the following problem seems to be reasonable.

\begin{prob}
  Classify additive actions on complete three-dimensional toric varieties.
  In particular, characterize complete toric 3-folds that admit a unique additive action.
  Is it true that the number of additive actions on a complete toric 3-fold is finite?
\end{prob}


\begin{thebibliography}{99}

\bibitem{A1}
Ivan Arzhantsev. Flag varieties as equivariant compactifications of $\mathbb{G}^n_a$.
Proc. Amer. Math. Soc. 139 (2011), no.~3, 783--786
\bibitem{ABZ}
Ivan Arzhantsev, Sergey Bragin, and Yulia Zaitseva. Commutative algebraic monoid structures on affine spaces.  Comm. Contem. Math., to appear; arXiv:1809.052911
%
\bibitem{ADHL}
Ivan Arzhantsev, Ulrich Derenthal, J\"urgen Hausen, and Antonio Laface. \emph{Cox rings}.
Cambridge Studies in Adv. Math. 144, Cambridge University Press, New York, 2015
%
\bibitem{AP}
Ivan Arzhantsev and Andrey Popovskiy. Additive actions on projective hypersurfaces. In:
Automorphisms in Birational and Affine Geometry, Proc. Math. Stat. 79,
Springer, 2014, 17-33
%
\bibitem{AR}
Ivan Arzhantsev and Elena Romaskevich. Additive actions on toric varieties. Proc. Amer. Math. Soc. 145 (2017), no.~5, 1865--1879
%
\bibitem{AS}
Ivan Arzhantsev and Elena Sharoyko. Hassett-Tschinkel correspondence:
Modality and projective hypersurfaces. J. Algebra 348 (2011), no.~1, 217--232
%
\bibitem{CLT1}
Antoine Chambert-Loir and Yuri Tschinkel. On the distribution of points of bounded
height on equivariant compactifications of vector groups. Invent. Math. 148 (2002), no.~2, 421-452
%
\bibitem{CLT2}
Antoine Chambert-Loir and Yuri Tschinkel. Integral points of bounded height on partial
equivariant compactifications of vector groups. Duke Math. J. 161 (2012), no.~15, 2799--2836
%
\bibitem{Cox}
David Cox. The homogeneous coordinate ring of a toric variety. J. Alg. Geom. 4 (1995), no.~1, 17--50
%
\bibitem{CLS}
David Cox, John Little, and Henry Schenck. \emph{Toric Varieties}. Graduate Studies in Math. 124, AMS, Providence, RI, 2011
%
\bibitem{De}
Michel Demazure. Sous-groupes algebriques de rang maximum du groupe de Cremona. Ann. Sci. Ecole
Norm. Sup. 3 (1970), 507--588
%
\bibitem{DL}
Ulrich Derenthal and Daniel Loughran.
Singular del Pezzo surfaces that are equivariant compactifications.
J.~Math. Sciences 171 (2010), no.~6, 714--724
%
\bibitem{Dev}
Rostislav Devyatov. Unipotent commutative group actions on flag varieties and nilpotent
multiplications. Transform. Groups 20 (2015), no.~1, 21--64
%
\bibitem{Fe}
Evgeny Feigin. $\mathbb{G}^M_a$ degeneration of flag varieties. Selecta Math. New Ser. 18 (2012),
no.~3, 513--537
%
\bibitem{FH}
Baohua Fu and Jun-Muk Hwang. Uniqueness of equivariant compactifications of $\C^n$
by a Fano manifold of Picard number $1$. Math. Research Letters 21 (2014), no.~1, 121--125
%
\bibitem{Fu}
William Fulton. \emph{Introduction to toric varieties}. Annales of Math. Studies 131,
Princeton University Press, Princeton, NJ, 1993
%
\bibitem{HT}
Brendan Hassett and Yuri Tschinkel. Geometry of equivariant compactifications of $\mathbb{G}^n_a$.
Int. Math. Res. Notices 1999 (1999), no.~22, 1211--1230
%
\bibitem{KL}
Friedrich  Knop and Herbert Lange. Commutative algebraic groups and intersections of quadrics. Math. Ann. 267 (1984), no.~4, 555-571
%
\bibitem{Oda}
Tadao Oda. \emph{Convex bodies and algebraic geometry: an introduction to toric varieties}. A Series of Modern Surveys in Math. 15, Springer Verlag, Berlin, 1988
%

\end{thebibliography}
\end{document}